\documentclass{amsart}

\usepackage[english]{babel}

\usepackage{tikz}

\usepackage{amsmath, amssymb}
\usepackage{amstext, amscd, amsfonts}
\usepackage{mathtools}
\usepackage{cancel}
\usepackage{mathrsfs}

\usepackage{amsthm}
\DeclareMathOperator{\var}{var}

\usepackage{tikz-cd}

\newtheorem{theorem}{Theorem}[section]
\newtheorem{theorem*}{Theorem}

\newtheorem{corollary}[theorem]{Corollary}

\newtheorem{lemma}[theorem]{Lemma}

\theoremstyle{definition}

\newtheorem{definition}[theorem]{Definition}

\newtheorem{remark}[theorem]{Remark}

\numberwithin{equation}{section}
\numberwithin{figure}{section}

\title[Decomposition of factor codes and unique relative equilibriums]{Decomposition of infinite-to-one factor codes and uniqueness of relative equilibrium states}
\author{Jisang Yoo}
\address{Ajou University, Suwon, South Korea}
\email{jisangy@kaist.ac.kr}
\thanks{This research was supported by Basic Science Research Program through the National Research Foundation of Korea(NRF) funded by the Ministry of Education(2012R1A6A3A01040839) and the National Research Foundation of Korea (NRF) grant funded by the MEST 2015R1A3A2031159.}

\keywords{class degree, relative equilibrium state, infinite-to-one factor code, relative thermodynamic formalism}
\subjclass[2010]{Primary 37B10; Secondary 37D35, 37A35} 

\begin{document}
\maketitle
\begin{abstract}
  We show that an arbitrary factor map $\pi:X \to Y$ on an irreducible subshift of finite type is a composition of a finite-to-one factor code and a class degree one factor code.
  Using this structure theorem on infinite-to-one factor codes, we then prove that any equilibrium state $\nu$ on $Y$ for a potential function of sufficient regularity lifts to a unique measure of maximal relative entropy on $X$. This answers a question raised by Boyle and Petersen (for lifts of Markov measures) and generalizes the earlier known special case of finite-to-one factor codes.
  \end{abstract}

\section{Introduction}

The usual setting for relative thermodynamic formalism starts with a fixed factor map between topological dynamical systems.
Since the non-relative case indicates that symbolic dynamical systems have the easiest thermodynamic properties, it makes sense to work things out for the relative case in symbolic systems first. Indeed, there has been some progress in this direction in recent years, with or without thermodynamic application in mind.

In this paper, we restrict our attention to infinite-to-one factor codes $\pi:X \to Y$ from irreducible SFTs (shifts of finite type) to sofic shifts, and finite-to-one factor codes between irreducible sofic shifts.
The structure of factor codes of the latter type is well understood now (Chapter 8 in \cite{LM}).
For example, each finite-to-one factor code $\pi$ can be associated with a number called degree $d$ which is the common number of points in the fiber $\pi^{-1}(y)$ for almost all $y\in Y$, and almost all fibers $\pi^{-1}(y)$ have a certain permutation structure in it.
The class of finite-to-one factor codes is important because irreducible sofic shifts can be classified up to change by finite-to-one factor codes, where the complete invariant is the topological entropy (Finite Equivalence Theorem). This resembles part of the Ornstein theory which classifies Bernoulli systems up to isomorphism, where the complete invariant is the measure theoretical entropy.
Finite-to-one factor codes are important also because each surjective cellular automaton is a finite-to-one factor code, and because the class of finite-to-one factor codes is the simplest nontrivial examples of principal extensions.

An early relative result on finite-to-one factor codes is a result on uniqueness of preimage of Markov measures by Tuncel~\cite{Tuncel1981conditional}.
He showed that for any $\pi:X \to Y$ finite-to-one factor code between mixing SFTs, each Markov measure $\nu$ on $Y$ lifts uniquely to an invariant measure on $X$ and the unique lift is a Markov measure.
This is a relative thermodynamic result because Markov measures are just equilibrium states for locally constant functions (equivalently, (invariant) Gibbs measures for such functions or g-measures for such $g$).

For infinite-to-one factor codes, there are usually infinitely many invariant measures $\mu$ on $X$ that project to the same Markov measure $\nu$ on $Y$ and in some cases, even uncountably many Markov measures $\mu$~\cite{BT-ITO}.

In \cite[Problem 3.16]{Boyle-Petersen-2009-hidden-markov-symbolic}, Boyle and Petersen raised the following question. Given a (possibly infinite-to-one) factor map $\pi:X \to Y$ between irreducible SFTs and a Markov measure $\nu$ on $Y$, is there a unique measure $\mu$ of maximal relative entropy over $\nu$ (meaning any lift of $\nu$ with maximal entropy among lifts of $\nu$) and does $\mu$ have full support?
The question of full support was answered positively in \cite{yoomaximal} by the author and its generalization to relative equilibrium states was also answered positively in \cite{ant2014compen} by Antonioli. The full support result is a consequence of the following more general phenomenon.
For each ergodic measure $\nu$ of full support (not necessarily an equilibrium state of a sufficiently regular potential function, let alone a Markov measure), 
all MMREs (i.e., measures of maximal relative entropy) over $\nu$ have full support \cite{yoomaximal}.

With a possibly non-Markov $\nu$, in which case there may be more than one MMREs over it, Petersen, Quas and Shin \cite{PQS-MaxRelEnt} showed that the number of ergodic MMREs is nonetheless finite.
They also found some easy-to-check sufficient condition on $\pi$ that guarantees uniqueness of MMREs over any fully supported ergodic $\nu$ rather than just over any Markov $\nu$.

Allahbakhshi and Quas defined an invariant (for factor codes) called class degree in \cite{all2013classdegrelmaxent}, generalizing the notion of degree for finite-to-one factor codes, and showed that the number of ergodic MMRes is bounded by class degree. This gives a broader sufficient condition on $\pi$, namely, having class degree one, for uniqueness of MMREs over any fully supported ergodic $\nu$.

In order to answer the question of uniqueness over Markov measures under arbitrary factor codes,
we decompose a factor code $\pi: X \to Y$ into two factor codes $\pi_1: X \to \widetilde{Y}$ and $\pi_2: \widetilde{Y} \to Y$ where we can apply earlier results for class degree one code and finite-to-one code to $\pi_1$ and $\pi_2$ respectively. This is the first main result of this paper and is in some sense a structure theorem of infinite-to-one factor codes. It reduces the study of arbitrary factor codes into that of finite-to-one factor codes and that of class degree one codes.
The second main result is the application of this decomposition theorem: proof of uniqueness of MMREs over Markov measures or over other equilibrium states of sufficiently regular functions.

In the following table, we list known and new conditions for uniqueness of MMREs.

\begin{center}
\begin{tabular}{|l|r|r|}
\hline
 & condition on $\pi$ & condition on $\nu$ \\
\hline
classical (Tuncel) & finite-to-one & regular equilibrium state \\
recent result & class degree one & full support \\
new result & none & regular equilibrium state \\
\hline
\end{tabular}
\end{center}

In the next section, we introduce necessary definitions and facts. In Section~\ref{sec:decomposition}, we prove the decomposition result.
In Section~\ref{sec:uniquenes}, we prove uniqueness of relative equilibrium states (more general than MMREs) over sufficiently regular equilibrium states.
\section{Definitions}

$(X, T)$ is a \emph{topological dynamical system} (or TDS, for short) if $T$ is a homeomorphism of a compact metric space $X$.
A map $\pi: X \to Y$ between two TDS $(X, T)$ and $(Y, S)$ is a \emph{factor map} if $\pi$ is continuous, surjective and equivariant (i.e. $\pi$ commutes $T, S$).
A \emph{factor code} is a factor map between shift spaces.

Given a factor map $\pi: X \to Y$ and a probability measure $\mu$ on $X$, we denote by $\pi \mu$ or $\mu \circ \pi^{-1}$ the pushforward measure (image measure) from $\mu$ under $\pi$. So $\pi\mu$ is a probability measure on $Y$ defined by $\pi\mu (B) = \mu (\pi^{-1}(B))$ for each Borel subset $B \subset Y$.

Shift spaces in this paper always mean one-dimensional two-sided shift spaces, i.e., subsystems of the full shift $\mathcal A^{\mathbb Z}$ with some finite alphabet $\mathcal A$.
The two-sided assumption is because the degree theory and class degree theory rely on it.
Not much is lost by the two-sided assumption, since two-sided irreducible sofic shifts and their one-sided versions are essentially interchangeable in terms of invariant measures, entropy and other thermodynamical notions.
A shift of finite type (SFT, for short) means a shift space defined by local rules of uniformly bounded range, or equivalently, a shift space defined by a finite set of forbidden blocks.
A sofic shift means a shift space with rules of space complexity $O(1)$, or to be precise, an image of a SFT under a factor code.

A shift space $X$ is \emph{irreducible} if each pair of $X$-words $u, v$ can be connected by some third $X$-word $w$ so that $uwv$ is also an $X$-word.
A shift space $X$ is \emph{mixing} if it is topologically mixing.
The shift map on a shift space $X$ is denoted by $\sigma_X$ or just $\sigma$.

Definitions of infinite-to-one factor codes, finite-to-one factor codes and degrees of finite-to-one factor codes are as follows. For the general theory of these notions, we refer to \cite{LM}.

\begin{definition}\cite{LM}
  A factor code $\pi: X \to Y$ is \emph{finite-to-one} if the fiber $\pi^{-1}(y)$ over each point $y\in Y$ is a finite set. Otherwise, it is called \emph{infinite-to-one}.
\end{definition}

To define the degree of a finite-to-one factor code, we need transitive points.
\begin{definition}
  Given an irreducible sofic shift $Y$, a point $y \in Y$ is \emph{right transitive} if its forward orbit is dense (in $Y$). A point $y\in Y$ is \emph{doubly transitive} if both its forward orbit and its backward orbit are dense.
\end{definition}
It is well known that the doubly transitive points of $Y$ form a residual subset of $Y$.
\begin{definition}\cite{LM}
  Given a finite-to-one factor code $\pi$ from an irreducible sofic $X$ onto a sofic shift $Y$, the degree of $\pi$ is defined to be the unique number $d\in \mathbb N$ such that $|\pi^{-1}(y)|=d$ for all doubly transitive point $y\in Y$. If $X$ is an SFT, then this number is also the minimum of $|\pi^{-1}(y)|$ over all $y \in Y$.
\end{definition}

Given an irreducible sofic shift $Y$, there is always an irreducible SFT $X$ and a factor code $\pi:X \to Y$ which has degree one. The minimal right resolving presentation of $Y$ is such an example.

Now we move on to definitions for infinite-to-one factor codes.

\begin{definition}\cite{all2013classdegrelmaxent}
  Given a (possibly infinite-to-one) factor code $\pi$ from an irreducible SFT $X$ onto a sofic shift $Y$, we may define an equivalence relation on $X$ as follows. For $x, x' \in X$, we say $x \to x'$ if $\pi(x)=\pi(x')=y$ for some $y\in Y$ and for each $n \in \mathbb N$ there is $x''\in \pi^{-1}(y)$ such that $x''_{(-\infty,n]} = x_{(-\infty,n]}$ and $x''_{[i, \infty)} = x'_{[i, \infty)}$ for some $i>n$. We say $x \sim x'$ if $x \to x'$ and $x' \to x$. The equivalence classes from the equivalence relation $\sim$ on $X$ are called \emph{transition classes}. For each $y\in Y$, the transition classes in $\pi^{-1}(y)$ will be called transition classes over $y$.
\end{definition}

For each $y\in Y$, the number of transition classes over $y$ is finite. The minimum of this number over all $y\in Y$ is called \emph{class degree} of $\pi:X\to Y$. This is also the number of transition classes over each right transitive point of $Y$ (Corollary 4.23 in \cite{all2013classdegrelmaxent}).

\begin{definition}\cite{all2013classdegrelmaxent}
  Let $\pi:X\to Y$ be a 1-block factor code from a 1-step SFT $X$ onto a sofic $Y$. Let $w = w_0 w_1 \cdots w_p$ be a $Y$-block of length $1+p$. Let $n$ be an integer with $0<n<p$. Let $M$ be a subset of $\pi^{-1}(w_n)$. We say an $X$-word $u \in \pi^{-1}(w)$ is \emph{routable through} $a\in M$ at time $n$ if there is a block $u' \in \pi^{-1}(w)$ such that $u'_0 =u_0$ and $u'_n = a$ and $u'_p = u_p$. A triple $(w,n, M)$ is called a \emph{transition block} of $\pi$ if every $u\in\pi^{-1}(w)$ is routable through a symbol of $M$ at time $n$. The cardinality of $M$ is called the \emph{depth} of the transition block $(w,n,M)$. A \emph{minimal transition block} is a transition block of minimal depth. The minimal depth is the same as the class degree if $X$ is irreducible.
\end{definition}

We will rely on the following unique routing property for minimal transition blocks.
\begin{lemma}\label{lem:uniquerouteword}
  [Lemma 4.1 in \cite{all2014structureclass}]
  Let $\pi$ be a factor code from an irreducible SFT $X$ onto a sofic $Y$.
  Suppose $\pi$ is 1-block and $X$ is 1-step.
  Let $(w,n,M)$ be a minimal transition block.
  Then each preimage of $w$ is routable through a unique symbol of $M$ at $n$.
\end{lemma}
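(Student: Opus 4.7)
My plan is to argue by contradiction: suppose some $u^* \in \pi^{-1}(w)$ is routable through two distinct symbols $a, b \in M$ at position $n$, and derive that the number of transition classes over a right-transitive point is strictly less than $|M|$, contradicting the minimality of $(w, n, M)$.

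Fix a right-transitive point $y^* \in Y$ with $y^*_{[0, p]} = w$; this exists because right-transitive points form a residual set in $Y$. For each $a \in M$, set
\[
  R(a) = \{z \in \pi^{-1}(y^*) : z_{[0, p]} \text{ is routable through } a \text{ at } n\}.
\]
By the transition block property, $\bigcup_{a \in M} R(a) = \pi^{-1}(y^*)$. The key step is to show that each nonempty $R(a)$ lies inside a single $\sim$-class of $\pi^{-1}(y^*)$: given $z_1, z_2 \in R(a)$, replace their $[0, p]$-blocks by routings through $a$ to obtain $\hat z_1, \hat z_2 \in \pi^{-1}(y^*)$ with $(\hat z_i)_n = a$; the $1$-step property of $X$ makes this replacement legal at the boundary positions $0$ and $p$, so $\hat z_i$ agrees with $z_i$ outside $[0, p]$, giving $\hat z_i \sim z_i$ via the shared right tails. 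One then glues $\hat z_1$ on $(-\infty, n]$ with $\hat z_2$ on $[n, \infty)$ (valid since both have value $a$ at $n$) and argues $\hat z_1 \sim \hat z_2$ using right-transitivity of $y^*$ and irreducibility of $X$.

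Since $y^*$ is right-transitive, $\pi^{-1}(y^*)$ has exactly $c$ transition classes, and $|M| = c$ by minimality. The map $a \mapsto (\text{class containing } R(a))$ is thus well-defined and surjective, hence bijective. Consequently $R(a) \cap R(b) = \emptyset$ for $a \neq b$, since any $z$ in the intersection would lie in two distinct classes. This gives uniqueness for every $z \in \pi^{-1}(y^*)$. For an arbitrary $u \in \pi^{-1}(w)$ I would then replace $y^*$ by a right-transitive point chosen so that $u$ is realized as a window of some $z \in \pi^{-1}(y^*)$ at an occurrence of $w$; such a point exists by extending $u$ to a bi-infinite point of $X$ and approximating its image in the dense forward orbit of a right-transitive $y \in Y$.

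The main obstacle is the gluing step $\hat z_1 \sim \hat z_2$. Agreement at the single position $n$ does not directly satisfy the $\to$-condition for large window indices, which demands hybrid preimages of $y^*$ that agree with $\hat z_1$ on arbitrarily long left tails and with $\hat z_2$ on a right tail; so one must reroute between the two trajectories past any prescribed position within the preimage graph of $y^*$. Justifying this rerouting carefully --- using right-transitivity of $y^*$ to guarantee enough mixing --- is the technical heart of the argument. A secondary obstacle is the realization step above, showing every preimage of $w$ embeds in some preimage of some right-transitive point.
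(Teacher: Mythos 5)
The paper does not prove this statement; it is quoted verbatim as Lemma 4.1 of Allahbakhshi--Hong--Jung \cite{all2014structureclass}, so there is no internal proof to compare against. Judged on its own, your outline follows the right general strategy (play the routing sets $R(a)$ against the fact that a right transitive point has exactly $c=|M|$ transition classes), but the step you yourself flag as ``the technical heart'' is a genuine gap, and the gluing you propose cannot close it. Agreement of $\hat z_1$ and $\hat z_2$ at the single coordinate $n$ produces a point $g$ that is right-asymptotic to $\hat z_2$ and only \emph{left}-asymptotic to $\hat z_1$, and left-asymptotic preimages of a right transitive point need not be in the same transition class: the relation $x\to x'$ demands hybrid preimages that copy $x$ on $(-\infty,n']$ for \emph{every} $n'$, so you must be able to switch from $\hat z_1$ to $\hat z_2$ at occurrences of $w$ arbitrarily far to the right, and sharing a routing symbol at the one occurrence $[0,p]$ gives you no control there. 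Worse, the statement ``each nonempty $R(a)$ lies in a single class'' is essentially equivalent to the lemma itself (it is what Lemma~\ref{lem:uniqueroutepoint} plus unique routing yields), so attempting to prove it first by a local gluing is close to circular.

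The way this is actually closed in \cite{all2014structureclass,all2013classdegrelmaxent} is to work with \emph{infinitely many} occurrences at once: if $u^*$ were routable through both $a$ and $b$, take a right transitive point $x$ of $X$ in which $u^*$ occurs at infinitely many forward positions (so $y=\pi(x)$ is right transitive and $w$ occurs there at those positions), let $x=x^{(1)},\dots,x^{(c)}$ represent the $c$ classes over $y$, and let $A_k(x^{(j)})\subseteq M$ be the set of symbols through which the window of $x^{(j)}$ at the $k$-th occurrence is routable. If two of these sets intersect at infinitely many occurrences, the two-sided reroute-at-$i_k$ hybrid shows the corresponding points are $\sim$-equivalent; hence for all large $k$ the sets $A_k(x^{(1)}),\dots,A_k(x^{(c)})$ are pairwise disjoint nonempty subsets of the $c$-element set $M$, while $A_k(x^{(1)})\supseteq\{a,b\}$ at the occurrences of $u^*$, giving at least $c+1$ elements of $M$ --- a contradiction. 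Note also that your realization step should be done by choosing a transitive point of $X$ containing $u$ (whose image is then right transitive in $Y$), not by approximating $\pi$-images in $Y$: not every preimage of $w$ need occur as a window of a preimage of a prescribed $y$ at a prescribed occurrence. As written, the proposal is an accurate plan with the decisive step missing.
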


We will also rely on the following properties of transition classes over right transitive points.
\begin{lemma}\label{lem:mutsep}
  [Theorem 4.4 in \cite{all2014structureclass}]
  Let $\pi$ be a factor code from an irreducible SFT $X$ onto a sofic $Y$.
  Suppose $\pi$ is 1-block and $X$ is 1-step.
  Let $y\in Y$ be right transitive.
  Then any two points from two distinct transition classes over $y$ are mutually separated.
\end{lemma}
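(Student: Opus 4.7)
The plan is to argue by contradiction: suppose $x, x' \in \pi^{-1}(y)$ lie in two distinct transition classes but $x_k = x'_k$ for some $k$. After applying $\sigma^{-k}$ to $x, x', y$ uniformly, we may take $k = 0$, so $x_0 = x'_0$.

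First I would stitch at position $0$: define $\tilde x$ by $\tilde x_i = x_i$ for $i \le 0$ and $\tilde x_i = x'_i$ for $i \ge 0$. Since $X$ is 1-step, the only 2-block of $\tilde x$ requiring verification is $\tilde x_0 \tilde x_1 = x'_0 x'_1$, which is allowed because $x' \in X$; and $\pi(\tilde x) = y$ follows since $\pi$ is 1-block. So $\tilde x \in \pi^{-1}(y)$. I would then observe $\tilde x \sim x'$: since $\tilde x$ and $x'$ agree on $[0, \infty)$, the choice $x'' = \tilde x$ in the definition of $\to$ witnesses $\tilde x \to x'$ for every $N$ (taking $i := \max(N+1, 0)$), and the choice $x'' = x'$ witnesses $x' \to \tilde x$.

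Next I would leverage the minimal transition block structure. Fix a minimal transition block $(w, n, M)$ of depth $c$ equal to the class degree; right transitivity of $y$ gives arbitrarily large $J$ with $y_{[J, J+p]} = w$. By Lemma~\ref{lem:uniquerouteword} each preimage of $w$ routes through a unique symbol of $M$. A standard bounded-modification argument (substituting a routing witness into a point of $\pi^{-1}(y)$ alters it only on a finite window, so preserves the transition class) produces a well-defined $\sim$-invariant label $a(z) \in M$ for each $z \in \pi^{-1}(y)$, independent of $J$, which bijects transition classes with $M$. In particular $a(x) \neq a(x')$, and by the preceding paragraph $a(\tilde x) = a(x')$.

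The crux step is to show $a(\tilde x) = a(x)$, whence $a(x) = a(x')$ and a contradiction. This must use the agreement $\tilde x = x$ on $(-\infty, 0]$. Concretely, for a large $J$ with $y_{[J, J+p]} = w$ I would build, using the past agreement together with the 1-step / 1-block hypotheses and an irreducibility-style construction inside $X$, an $X$-block preimage of $w$ with boundary values $\tilde x_J, \tilde x_{J+p}$ passing through $a(x)$ at internal position $n$: begin from the routing witness for $x_{[J,J+p]}$ through $a(x)$, and splice its endpoints to $\tilde x_J = x'_J$ and $\tilde x_{J+p} = x'_{J+p}$ through the agreement $x_0 = \tilde x_0$ and paths $x_{[0,J]}$, $x'_{[0,J]}$ which share the common starting symbol. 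The uniqueness clause of Lemma~\ref{lem:uniquerouteword} then forces $a(\tilde x) = a(x)$ and closes the contradiction. This final construction is the heart of the proof and the main obstacle: one must convert the left-infinite agreement into forward routing information, and the difficulty lies in ensuring the interpolating $X$-paths respect both the projection to $w$ and the prescribed routing symbol.
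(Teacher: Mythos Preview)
The paper does not prove this lemma; it is quoted without proof from \cite{all2014structureclass} (Theorem~4.4 there), so there is no argument in the paper to compare against. I can therefore only comment on the internal soundness of your proposal.

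Your crux step does not go through, and in fact the target equality $a(\tilde x)=a(x)$ is false under your own hypotheses. Right transitivity of $y$ guarantees occurrences of $w$ only at positions $J$ arbitrarily far to the \emph{right}; for every such $J>0$ you have $\tilde x_{[J,J+p]}=x'_{[J,J+p]}$ verbatim, so by Lemma~\ref{lem:uniquerouteword} this block routes through the \emph{unique} symbol $a(x')$, not $a(x)$. Your proposed fix---starting from a routing witness for $x_{[J,J+p]}$ and ``splicing its endpoints'' to $x'_J,x'_{J+p}$ via the paths $x_{[0,J]},x'_{[0,J]}$---cannot produce what you need: those paths live entirely in $[0,J]$, outside the window $[J,J+p]$, and no concatenation of them yields an $X$-block of length $p+1$ projecting to $w$ with the prescribed endpoints. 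Indeed, if such a block existed it would exhibit $x'_{[J,J+p]}$ as routable through $a(x)$, contradicting the uniqueness in Lemma~\ref{lem:uniquerouteword} once you have assumed $a(x)\neq a(x')$. In short, the single past agreement at coordinate $0$ carries no routing information forward to $[J,J+p]$; your own bijectivity claim for $a$ makes the desired conclusion $a(\tilde x)=a(x)$ impossible rather than attainable. A secondary concern: the well-definedness and class-invariance of your labelling $a$ is exactly Lemma~\ref{lem:uniqueroutepoint}, which in \cite{all2014structureclass} is Lemma~5.1 and appears \emph{after} Theorem~4.4, so invoking it here risks circularity. The actual proof in \cite{all2014structureclass} proceeds differently and does not pass through the stitched point $\tilde x$.
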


The unique routing property for transition classes over right transitive points is as follows.
\begin{lemma}\label{lem:uniqueroutepoint}
  [Lemma 5.1 in \cite{all2014structureclass}]
  Let $\pi$ be a factor code from an irreducible SFT $X$ onto a sofic $Y$.
  Suppose $\pi$ is 1-block and $X$ is 1-step.
  Let $y\in Y$ be right transitive.
  Suppose $y_{[i,i+|w|)}=w$ for some $i$ and some minimal transition block $(w,n,M)$.
  Let $C$ be a transition class over $y$.
  Then there is a unique symbol $b\in M$ such that for each $x\in C$, $x_{[i,i+|w|)}$ is routable through $b$ at $n$.
\end{lemma}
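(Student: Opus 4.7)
The plan is to argue by contradiction using the uniqueness of routing from Lemma~\ref{lem:uniquerouteword} together with the gluing structure of the transition class relation $\to$.

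For each $z \in \pi^{-1}(y)$, Lemma~\ref{lem:uniquerouteword} provides a unique routing symbol $b(z) \in M$ such that $z_{[i, i+|w|)}$ is routable through $b(z)$ at time $n$. The task is to show $b$ is constant on $C$. Suppose for contradiction that $x, x' \in C$ have $b(x) = b \ne b' = b(x')$.

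First I normalize. By the definition of routability, pick a preimage $u$ of $w$ with $u_0 = x_i$, $u_n = b$, $u_{|w|-1} = x_{i+|w|-1}$, and splice $u$ into $x$ on the window $[i, i+|w|)$ (valid by the $1$-step property of $X$) to form $\hat x \in \pi^{-1}(y)$ with $\hat x_{i+n} = b$; since $\hat x$ differs from $x$ only on a finite window, the definition of $\to$ gives $\hat x \sim x$, so $\hat x \in C$. Analogously form $\hat x' \in C$ with $\hat x'_{i+n} = b'$.

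Next I exploit $\hat x \sim \hat x'$. Using $\hat x \to \hat x'$, the goal is to produce two witnesses $z^{(1)}, z^{(2)} \in \pi^{-1}(y)$ glued from $\hat x$ on the left of cuts $N_1 = i+n$ and $N_2 = i+n-1$ respectively, matching $\hat x'$ on the right starting at positions $m_j \leq i+|w|-1$. Both witnesses then share the endpoint pair $(x_i, x'_{i+|w|-1})$ at the $w$-window, while their middle symbols at position $i+n$ are $b$ and $b'$ respectively. Applying Lemma~\ref{lem:uniquerouteword} in each case forces the routing $r(x_i, x'_{i+|w|-1})$ to equal the middle symbol, giving $b = r(x_i, x'_{i+|w|-1}) = b'$, the desired contradiction.

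The main obstacle is arranging these ``tight'' splits with $m_j$ close to $N_j$: the definition of $\to$ only guarantees the existence of \emph{some} split $m > n$, not a prescribed one. I expect to overcome this by first invoking $\to$ to get arbitrary witnesses and then shrinking the split position using the $1$-step property of $X$; when direct shrinking fails due to an incompatible $X$-edge at the cut, right-transitivity of $y$ supplies buffer room on either side of $[i, i+|w|)$ to reroute the gluing through a valid intermediate path. The remaining pathological case, in which no such reroute exists, would amount to $\hat x$ and $\hat x'$ being mutually separated across the window, contradicting $\hat x \sim \hat x'$ via Lemma~\ref{lem:mutsep}.
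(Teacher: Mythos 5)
Your setup is sound and matches the first half of the standard argument: by Lemma~\ref{lem:uniquerouteword} the routing symbol of a preimage of $w$ is well defined and depends only on its endpoint pair, and the splicing that produces $\hat x\in C$ with $\hat x_{i+n}=b$ is valid (the $1$-step property and the matching endpoints make the spliced point an $X$-point, and it agrees with $x$ outside a finite window, hence lies in $C$). The gap is in the core step. The definition of $\to$ gives, for each cut $N$, a witness rejoining $x'$ at \emph{some} $m>N$, with no control whatsoever on $m$; you need $m\le i+|w|-1$ (indeed $m\le i+n$ for the second witness, i.e.\ $m=N_2+1$ exactly), and there is no way to force this. Your proposed remedies do not close this: ``shrinking the split position'' via the $1$-step property would require the left half of $\hat x$ up to the new cut to connect legally to the right half of $\hat x'$ just after it, which is precisely the kind of routing compatibility the lemma is asserting, and right-transitivity of $y$ gives occurrences of blocks far to the right, not switching opportunities inside the fixed window $[i+n,i+|w|)$. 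The fallback is also invalid: Lemma~\ref{lem:mutsep} says distinct classes are mutually separated; it does not say that points of the same class fail to be mutually separated (they can disagree in every coordinate --- consider two distinct fixed points over a common fixed point under a class degree one code), so no contradiction with $\hat x\sim\hat x'$ arises.

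The missing idea is a counting argument over \emph{all} transition classes, using Lemma~\ref{lem:mutsep} only in its contrapositive form (two preimages of a right transitive point agreeing in a single coordinate lie in the same class). Pick representatives $x^{(1)},\dots,x^{(c)}$ of the $c$ classes over $y$ and splice each as you did to get $\tilde x^{(p)}$ in the $p$-th class with $\tilde x^{(p)}_{i+n}=b_p\in M$. Mutual separation of distinct classes forces $b_1,\dots,b_c$ to be distinct; since $c$ equals the class degree, which equals the minimal depth $|M|$, the assignment class $\mapsto b_p$ is a bijection onto $M$. Now if $x\in C_p$ has routing symbol $a=b_q$, splice $x$ to $\tilde x\in C_p$ with $\tilde x_{i+n}=a=\tilde x^{(q)}_{i+n}$; agreement in one coordinate puts $\tilde x$ and $\tilde x^{(q)}$ in the same class, so $p=q$ and $a=b_p$. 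This is the mechanism that makes the routing symbol constant on each class; the two-point gluing you attempt cannot substitute for it.
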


\section{Decomposition of infinite-to-one factor codes}
\label{sec:decomposition}

To decompose an infinite to one factor code $\pi$ into $\pi_1$ and $\pi_2$ and verify that the two resulting factor codes have the desired properties, we need to establish convenient characterizations of those properties.

\begin{definition}
  A point $x\in X$ in a shift space is \emph{left-asymptotic} to $x'\in X$ if $d(\sigma^{-i}x, \sigma^{-i} x') \to 0$ as $i \to \infty$.
  A point $x$ is \emph{right-asymptotic} to $x' \in X$ if $d(\sigma^i x, \sigma^i x') \to 0$ as $i \to \infty$ or equivalently, if there is some $m$ such that $x_{[m,\infty)} = x'_{[m,\infty)}$.
\end{definition}

In the following lemma, we establish characterizations of class degree one factor codes that do not rely on any recoding assumption on the factor code.
\begin{lemma}\label{lem:classdeg1}
  Let $\pi$ be a factor code from an irreducible SFT $X$ onto a sofic shift $Y$. The following are equivalent.
  \begin{enumerate}
  \item The class degree of $\pi$ is 1.
  \item For each doubly transitive point $y \in Y$ and for each ordered pair $x,x'$ in the fiber $\pi^{-1}(y)$ there is $x'' \in \pi^{-1}(y)$ that is left asymptotic to $x$ and right asymptotic to $x'$.
  \item For each right transitive point $y \in Y$ and for each ordered pair $x,x'$ in the fiber $\pi^{-1}(y)$ there is $x'' \in \pi^{-1}(y)$ that is left asymptotic to $x$ and right asymptotic to $x'$.
  \item There is a doubly transitive point $y \in Y$ such that each ordered pair $x,x'$ in the fiber $\pi^{-1}(y)$ there is $x'' \in \pi^{-1}(y)$ that is left asymptotic to $x$ and right asymptotic to $x'$.
  \item There is a right transitive point $y \in Y$ such that each ordered pair $x,x'$ in the fiber $\pi^{-1}(y)$ there is $x'' \in \pi^{-1}(y)$ that is left asymptotic to $x$ and right asymptotic to $x'$.
  \end{enumerate}
\end{lemma}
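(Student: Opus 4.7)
The plan is to establish the cyclic chain of implications $(1) \Rightarrow (3) \Rightarrow (2) \Rightarrow (4) \Rightarrow (5) \Rightarrow (1)$. Before starting, I would recode $\pi$ so that it is $1$-block and $X$ is $1$-step; this is harmless because both the class degree and the left/right asymptotic relations are conjugacy invariants, and after recoding Lemmas~\ref{lem:uniquerouteword}--\ref{lem:uniqueroutepoint} become directly applicable.

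For $(1) \Rightarrow (3)$, I would appeal to the fact cited just after the definition of class degree (Corollary~4.23 of \cite{all2013classdegrelmaxent}): the number of transition classes over any right transitive $y$ equals the class degree. If the class degree is $1$, then every pair $x, x' \in \pi^{-1}(y)$ satisfies $x \sim x'$ and in particular $x \to x'$. Unpacking the definition of $\to$ with $n = 0$ immediately yields an $x'' \in \pi^{-1}(y)$ that agrees with $x$ on $(-\infty, 0]$ and with $x'$ on $[i, \infty)$ for some $i > 0$; such an $x''$ is left-asymptotic to $x$ and right-asymptotic to $x'$, which is exactly what (3) demands.

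The middle links $(3) \Rightarrow (2) \Rightarrow (4) \Rightarrow (5)$ are essentially formal: $(3) \Rightarrow (2)$ because every doubly transitive point is right transitive; $(2) \Rightarrow (4)$ because the set of doubly transitive points is residual and hence nonempty; and $(4) \Rightarrow (5)$ is again the inclusion of doubly transitive into right transitive points.

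The substantive step is $(5) \Rightarrow (1)$, which I expect to be the main obstacle. I would argue contrapositively: if the class degree is at least $2$, then by Corollary~4.23 there are at least two distinct transition classes over the right transitive witness $y$ of~(5). Choosing $x, x' \in \pi^{-1}(y)$ in two distinct classes, Lemma~\ref{lem:mutsep} asserts that they are mutually separated, which in this context precisely rules out the existence of any $x'' \in \pi^{-1}(y)$ that is simultaneously left-asymptotic to $x$ and right-asymptotic to $x'$, contradicting the mediating property required by~(5). The delicate point is identifying ``mutually separated'' with the non-existence of such an asymptotic mediator; once this is verified by unpacking the definition underlying Theorem~4.4 of \cite{all2014structureclass}, the cycle closes and all five conditions are equivalent.
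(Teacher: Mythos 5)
Your overall route is essentially the paper's: recode to a $1$-block code on a $1$-step SFT, dispose of the formal implications among (2)--(5), and reduce everything to $(1)\Rightarrow(3)$ and $(5)\Rightarrow(1)$ via Corollary~4.23 of \cite{all2013classdegrelmaxent} and Lemma~\ref{lem:mutsep}. Your $(1)\Rightarrow(3)$ argument (one transition class over a right transitive point, then unpack $x\to x'$ with $n=0$) is exactly the paper's, and the chain $(3)\Rightarrow(2)\Rightarrow(4)\Rightarrow(5)$ is fine.

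The gap is in $(5)\Rightarrow(1)$, precisely at the point you flag as delicate, and the repair is not the one you propose. Mutual separation of the pair $x,x'$ does \emph{not} by itself rule out a mediator: ``mutually separated'' is a statement about how $x$ and $x'$ relate to each other (disagreement along the orbit), and two points that disagree everywhere can perfectly well admit a third point $x''$ that copies $x$ on a left tail and $x'$ on a right tail --- indeed the 1-step SFT structure typically manufactures such points. So ``identifying mutually separated with the non-existence of such an asymptotic mediator'' is an identification that fails, and no amount of unpacking Theorem~4.4 of \cite{all2014structureclass} will establish it. The correct use of Lemma~\ref{lem:mutsep} is to apply it to the pairs involving the hypothetical mediator itself: $x''\in\pi^{-1}(y)$ lies in some transition class over $y$; being left-asymptotic to $x$ it is not mutually separated from $x$, so by the contrapositive of Lemma~\ref{lem:mutsep} it lies in the same class as $x$; being right-asymptotic to $x'$ it likewise lies in the same class as $x'$; hence $x\sim x'$. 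Thus condition (5) forces a single transition class over $y$, and since the class degree is the minimum number of classes over points of $Y$, it equals $1$ (this direction does not even need Corollary~4.23). With that substitution your argument closes correctly.
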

\begin{proof}
Each of the five conditions is invariant under conjugacy and so we may assume that $\pi$ is 1-block and $X$ is 1-step.
Among the last four conditions, the seemingly strongest condition is (3) and the seemingly weakest is (5), so it suffices to show that (5) implies (1) and that (1) implies (3).

((5) $\to$ (1)): Suppose $y$ is a right transitive point satisfying the condition (5). Since $y$ is right transitive, distinct transition classes over it are mutually separated,
but then the condition forces all transition classes over it to be the same, i.e., there is only one transition class over $y$. Since the class degree of $\pi$ is the minimum number of transition classes over points in $Y$, the condition (1) follows.

((1) $\to$ (3)): Suppose class degree 1 and let $y \in Y$ a right transitive point and $x,x'$ an ordered pair in the fiber $\pi^{-1}(y)$. Since $y$ is right transitive, there is only one transition class over it.
Therefore $x, x'$ are in the same transition class over $y$ and so the conclusion of condition (3) follows from the definition of transition classes.
\end{proof}

In the following lemma, we establish similar characterizations of degree $d$ factor codes.
\begin{lemma}\label{lem:degd}
  Let $\pi$ be a factor code from an irreducible \emph{sofic} shift $X$ onto another $Y$ and let $d \in \mathbb N$. The following are equivalent.
  \begin{enumerate}
  \item The factor code $\pi$ is finite-to-one and its degree is $d$.
  \item For each doubly transitive point $y \in Y$, the fiber $\pi^{-1}(y)$ contains exactly $d$ points.
  \item There is a doubly transitive point $y \in Y$ such that the fiber $\pi^{-1}(y)$ contains exactly $d$ points.
  \end{enumerate}
\end{lemma}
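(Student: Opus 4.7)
The equivalence splits naturally into the easy directions $(1) \Rightarrow (2) \Rightarrow (3)$ and the substantive direction $(3) \Rightarrow (1)$. For the first I would simply invoke the definition of the degree of a finite-to-one factor code between irreducible sofic shifts, which asserts that $|\pi^{-1}(y)|$ is constantly $d$ on the doubly transitive points of $Y$. For $(2) \Rightarrow (3)$ I would use that the doubly transitive points of an irreducible sofic shift form a residual (in particular nonempty) subset.

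The main work is $(3) \Rightarrow (1)$, which I plan to attack by contradiction. Suppose a doubly transitive $y \in Y$ has exactly $d$ preimages, yet $\pi$ is \emph{not} finite-to-one. After passing to an SFT cover of $X$ with a degree one resolving map and recoding so that the composed factor code is $1$-block from a $1$-step SFT (none of the three conditions being disturbed by such adjustments), the classical diamond criterion from Chapter~8 of \cite{LM} produces a \emph{diamond}: distinct $X$-words $u \neq u'$ of the same length with $\pi(u) = \pi(u')$ and agreeing in their initial and terminal symbols, so that either may be substituted into an $X$-preimage wherever $\pi(u)$ occurs in the base point.

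The next step is to use double transitivity of $y$ to exhibit uncountably many preimages, contradicting $|\pi^{-1}(y)| = d < \infty$. Since $y$ is doubly transitive, $\pi(u)$ occurs in $y$ infinitely often in both the forward and backward directions, at a set of positions that can be taken pairwise disjoint. Fix any one $x \in \pi^{-1}(y)$; at each such occurrence I would independently choose to leave the corresponding segment of $x$ intact or to replace it by the companion word from the diamond. The $1$-step property of $X$ together with the endpoint agreement of $u$ and $u'$ guarantees that every sequence of choices yields a legitimate point of $\pi^{-1}(y)$, so these choices parametrize a Cantor-like family of distinct preimages of $y$. This contradiction forces $\pi$ to be finite-to-one, and the identification of its degree with $d$ then follows from the already-established $(1) \Rightarrow (2)$ direction applied to the actual degree.

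The principal technical obstacle I anticipate is the clean deployment of the diamond-and-splicing construction: one must verify that the recoded setup really does admit a diamond of the stated form, and that the simultaneous splicings at infinitely many separated positions genuinely produce distinct $X$-points rather than collapsing under some unforeseen identification. Once this classical machinery is in place, the remainder is bookkeeping.
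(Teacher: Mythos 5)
Your directions $(1)\Rightarrow(2)\Rightarrow(3)$ are fine and match the paper, which handles them by citing Theorem 8.1.19 and Corollary 9.1.14 of \cite{LM}. The paper's $(3)\Rightarrow(1)$ is also a reduction: it composes with the minimal right resolving presentation $\pi_R:X_R\to X$ to get a code on an irreducible SFT with a finite fiber over a doubly transitive point, and then cites Exercise 9.1.2 of \cite{LM} as a black box. You are instead trying to prove that exercise directly via the diamond criterion, which is a legitimate plan in principle, but your sketch has a genuine gap at the splicing step.

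The problem is the clause ``either may be substituted into an $X$-preimage wherever $\pi(u)$ occurs in the base point.'' The diamond gives you two specific words $u\neq u'$ with $\pi(u)=\pi(u')=w$ and common initial and terminal symbols $a,b$. An occurrence of $w$ in $y$ at position $i$ only tells you that $x_{[i,i+|w|)}$ is \emph{some} preimage of $w$ for each $x\in\pi^{-1}(y)$; it need not equal $u$ or $u'$, and it need not even begin with $a$ and end with $b$, so there may be no companion word available to swap in. The point $y$ is doubly transitive in $Y$, but its preimages need not be transitive in $X$, so you cannot force $u$ itself to occur in any preimage of $y$. (The naive splicing argument does show that \emph{some} point of $Y$ --- e.g.\ the image of a doubly transitive point of $X$, which contains $u$ infinitely often --- has uncountably many preimages; transferring that conclusion to the \emph{given} doubly transitive $y$ of condition (3) is exactly the content of the exercise and requires an additional idea, such as the magic-word/$d^*_\pi$ counting machinery of Section 9.1 of \cite{LM} or an argument that fiber cardinality is constant on doubly transitive points.) The obstacles you flag at the end (existence of the diamond after recoding, distinctness of the spliced points) are the harmless ones; the applicability of the splicing to $y$ and its actual preimages is the real issue, and as written the contradiction is not reached.
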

\begin{proof}
It is known that under the assumption of our lemma, $\pi^{-1}(y)$ being finite for all $y \in Y$ implies the seemingly stronger property that there is a uniform upper bound on the size of $\pi^{-1}(y)$ over all $y \in Y$ (Theorem 8.1.19  in \cite{LM}) and this property in turn implies the property that the finite size of $\pi^{-1}(y)$ over any \emph{doubly transitive} $y \in Y$ is the same and this size is defined to be the degree of $\pi$ (Corollary 9.1.14 in \cite{LM}). Therefore the condition (1) implies (2). The condition (2) trivially implies (3).

It remains to show that (3) implies (1). Suppose $y$ is a doubly transitive point satisfying the condition (3). It is enough to show that $\pi$ is finite-to-one. Let $\pi_R: X_R \to X$ be the minimal right resolving presentation of $X$. Since $\pi_R$ is finite-to-one and $\pi^{-1}(y)$ is finite, the fiber $(\pi\circ\pi_R)^{-1}(y)$ is finite. So $\pi\circ\pi_R$ is a factor code from an irreducible SFT $X_R$ to $Y$ with a finite fiber over some doubly transitive point.
Since $\pi\circ\pi_R$ is a factor code on an irreducible SFT with a finite fiber over at least one doubly transitive point,
$\pi\circ\pi_R$ must be finite-to-one.\footnote{Exercise 9.1.2 in \cite{LM}
  
}
Therefore $\pi$ is also finite-to-one.
\end{proof}

Now we are ready to prove the first main theorem.
\begin{theorem}
  Let $\pi$ be a factor code from an irreducible SFT $X$ onto a sofic shift $Y$. Let $c_\pi$ be its class degree. Then there is an irreducible sofic shift $\widetilde{Y}$ and factor codes $\pi_1: X \to \widetilde{Y}$ and $\pi_2: \widetilde{Y} \to Y$ such that $\pi = \pi_2 \circ \pi_1$ and $\pi_1$ has class degree 1 and $\pi_2$ is finite-to-one and has degree $c_\pi$.
\end{theorem}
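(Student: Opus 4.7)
My plan is to construct $\widetilde{Y}$ as a symbolic cover of $Y$ that records, at every occurrence of a fixed minimal transition block $(w, n, M)$ in a $Y$-point, the unique $M$-symbol through which its $X$-preimages are routed. After a routine recoding, I would assume that $\pi$ is a $1$-block code from a $1$-step SFT $X$, and fix a minimal transition block $(w, n, M)$ with $|M| = c_\pi$. Introducing the enlarged alphabet $\mathcal{A}_Y \times (M \cup \{\ast\})$, I would define a sliding block code $\pi_1$ by
\[ \pi_1(x)_i \;=\; \bigl(\pi(x)_i,\; \rho_i(x)\bigr), \]
where $\rho_i(x) \in M$ is the unique routing symbol (in the sense of Lemma~\ref{lem:uniquerouteword}) for $x_{[i, i+|w|)}$ at time $n$ whenever $\pi(x)_{[i, i+|w|)} = w$, and $\rho_i(x) = \ast$ otherwise. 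I would then set $\widetilde{Y} := \pi_1(X)$ and let $\pi_2$ be projection onto the first coordinate, so that $\pi = \pi_2 \circ \pi_1$; since $\pi_1$ is a factor code on an irreducible SFT, $\widetilde{Y}$ is automatically an irreducible sofic shift.

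To show $\pi_2$ is finite-to-one of degree $c_\pi$, I would apply Lemma~\ref{lem:degd}(3). At a doubly transitive $y \in Y$ there are exactly $c_\pi$ transition classes (Corollary 4.23 of \cite{all2013classdegrelmaxent}); by Lemma~\ref{lem:uniqueroutepoint} the routing profile encoded in $\pi_1(x)$ depends only on the transition class of $x$ over $y$, and a short argument using the minimality of $(w,n,M)$ shows that the induced map from transition classes to $M$ at any fixed occurrence of $w$ is bijective, so $|\pi_2^{-1}(y)| = c_\pi$. To show $\pi_1$ has class degree one, I would verify Lemma~\ref{lem:classdeg1}(5) at a doubly transitive $\widetilde{y} \in \widetilde{Y}$: its image $y = \pi_2(\widetilde{y})$ is right transitive in $Y$, and any two $x, x' \in \pi_1^{-1}(\widetilde{y})$ share the same routing profile and thus, by the bijection above, lie in a single transition class $C$ over $y$. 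From $x \sim x'$, the definition of $\to$ furnishes, for any fixed $n$, an $x'' \in \pi^{-1}(y)$ that agrees with $x$ on $(-\infty, n]$ and with $x'$ on some $[i, \infty)$ with $i > n$; such an $x''$ is simultaneously left-asymptotic to $x$ and right-asymptotic to $x'$.

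The main obstacle I anticipate is verifying the remaining equality $\pi_1(x'') = \widetilde{y}$: although $x''$ matches $x$ on a left tail and $x'$ on a right tail, it can a priori differ from both on the finite middle region, so its routings at occurrences of $w$ falling in that region are not controlled by its construction. I would resolve this by invoking Lemma~\ref{lem:mutsep}: since $x''$ is left-asymptotic to $x \in C$, it is not mutually separated from $C$, forcing $x'' \in C$; Lemma~\ref{lem:uniqueroutepoint} then guarantees that the routing of $x''$ at every occurrence of $w$ in $y$ coincides with that of $x$ (and of $x'$), giving $\pi_1(x'') = \widetilde{y}$ as required.
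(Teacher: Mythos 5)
Your proposal is correct and takes essentially the same approach as the paper: the identical marker construction $\widetilde{Y}\subset Y\times(M\cup\{\ast\})^{\mathbb Z}$ from a minimal transition block, with the degree count for $\pi_2$ and the class-degree-one verification for $\pi_1$ (including your resolution of the ``middle region'' issue for $x''$) resting on Lemmas~\ref{lem:mutsep} and~\ref{lem:uniqueroutepoint} exactly as in the paper's Stages 2 and 3. The only cosmetic quibble is that the injectivity of the map from transition classes to $M$ at a fixed occurrence is most directly a consequence of mutual separation (Lemma~\ref{lem:mutsep}) rather than of minimality of $(w,n,M)$ per se.
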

\[
  \begin{tikzcd}
  X \ar[dd, "\pi"] \ar[rd, "\pi_1"] \\
     & \widetilde{Y} \ar[dl, "\pi_2"] \\
  Y
  \end{tikzcd}
\]
\begin{proof}
  We may assume that $X$ is a 1-step SFT and $\pi$ is a 1-block factor code. Let $(w,n,M)$ be a minimal transition block for $\pi$. We may assume that 0 is a symbol that is \emph{not} an element of $M$. Let $\widetilde{M}$ be the disjoint union of $M$ and $\{0\}$.
  We will define a subshift $\widetilde{Y} \subset Y \times {\widetilde{M}}^{\mathbb Z}$. 
  Let $p_1: Y \times {\widetilde{M}}^{\mathbb Z} \to Y$ and $p_2: Y \times {\widetilde{M}}^{\mathbb Z} \to {\widetilde{M}}^{\mathbb Z}$ be the projection maps. 

  \textbf{Stage 1}. We construct $\pi_1, \pi_2, \widetilde{Y}$ first.
  
  \begin{proof}
We define a sliding block code $\pi_1: X \to Y \times {\widetilde{M}}^{\mathbb Z}$ (whose image will be denoted by $\widetilde{Y}$) by defining its two projections, namely, sliding block codes $p_1\circ\pi_1: X \to Y$ and $p_2\circ\pi_1: X \to {\widetilde{M}}^{\mathbb Z}$. First, define $p_1\circ\pi_1 = \pi$. Next, define $p_2\circ\pi_1$ in the following way. For each $x \in X$ and $i\in \mathbb Z$, let $(p_2\circ\pi_1(x))_i = 0$ if $\pi(x)_{[i-n, i+|w|-n-1]} \neq w$, otherwise let $(p_2\circ\pi_1(x))_i$ be the unique symbol in $M$ that the word $x_{[i-n, i+|w|-n-1]}$ (which is a preimage of $w$ via $\pi$) is routable through (at $n$).

It is easy to check that $\pi_1$ just defined is a sliding block code. Let $\widetilde{Y}$ be its image. This image is an irreducible sofic shift in $Y \times {\widetilde{M}}^{\mathbb Z}$ because $X$ is an irreducible SFT. Define $\pi_2: \widetilde{Y} \to Y$ to be the restriction of the projection $p_1$. It is easy to check that $\pi = \pi_2\circ\pi_1$. Since the composition $\pi = \pi_2\circ\pi_1$ is surjective the map $\pi_2$ is surjective as well and hence $\pi_2$ is a factor code onto $Y$. We have obtained a decomposition $\pi = \pi_2\circ\pi_1$ into factor codes. It remains to show that the factor codes $\pi_1, \pi_2$ have the desired properties.
\end{proof}

\textbf{Stage 2}. We claim $\pi_1$ has class degree one.
\begin{proof}
Note $\pi_1$ may not be a 1-block code. By Lemma~\ref{lem:classdeg1}, it is enough to show that for each right transitive point $\widetilde{y} \in \widetilde{Y}$ and for each ordered pair $x,x' \in \pi_1^{-1}(\widetilde{y})$ there is $x'' \in\pi_1^{-1}(\widetilde{y})$ that is left asymptotic to $x$ and right asymptotic to $x'$.

Let $\widetilde{y} = (y, s) \in \widetilde{Y}$ be right transitive and let $x,x' \in \pi_1^{-1}(\widetilde{y})$.
The point $y \in Y$ is right transitive because it is the image of right transitive $\widetilde{y}$ under the factor code $\pi_2$.
From the definition of $\pi_1$ we have $x, x' \in \pi^{-1}(y)$. Let $J \subset \mathbb Z$ be the set of all $i$ for which $y_{[i-n, i+|w|-n-1]} = w$, or equivalently, the set of all $i$ for which $s_i \neq 0$. The set $J$ marks the occurrences of the block $w$ along $y$. The set $J$ is non-empty (in fact, infinite to the right) because $y$ is right transitive. Fix one $i_* \in J$.
From the definition of $\pi_1$, the two blocks $x_{[i_*-n, i_*+|w|-n-1]}$ and $x'_{[i_*-n, i_*+|w|-n-1]}$ (which are preimages of $w$ via $\pi$) are routable through the common symbol $s_{i_*} \in M$. Using this routing, we can obtain a point $x'' \in \pi^{-1}(y)$ that is left asymptotic to $x$ and right asymptotic to $x'$ and $x''_{i_*} = s_{i_*}$. Since different transition classes over $y$ (via $\pi$) must be mutually separated, $x,x',x''$ are in the same transition class over $y$.
Therefore, since $y$ is right transitive and $x, x',x''$ are in the same transition class, for each $i \in J$, the block $x''_{[i-n, i+|w|-n-1]}$ is routable through $s_i$, by Lemma~\ref{lem:uniqueroutepoint}. (Without using that lemma, by definition of $x''$, it is obvious that the block $x''_{[i-n, i+|w|-n-1]}$ is routable through $s_i$ for $i=i^*$ and for those $i\in J$ with $|i- i^*|\ge |w|$. The lemma takes care of the remaining case $0< |i-i^*|<|w|$ where the two occurrences of $w$ may overlap.)
Therefore, $\pi_1(x'') = (y, s)$ and the proof of $\pi_1$ having class degree 1 is complete.
\end{proof}

\textbf{Stage 3}. 
It remains to show that $\pi_2$ is finite-to-one and has degree $c_\pi$.
\begin{proof}
Let $y \in Y$ be doubly transitive. By Lemma~\ref{lem:degd}, we only need to show that $\pi_2^{-1}(y)$ contains exactly $c_\pi$ points.

Since $y \in Y$ is doubly transitive, there are exactly $c_\pi$ transition classes in $X$ over $y$. Fix $x^{(1)}, \dots, x^{(c_\pi)} \in \pi^{-1}(y)$ to be representatives of the distinct transition classes. We have $\pi_1(x^{(k)}) = (y, s^{(k)})$ for some $s^{(k)} \in {\widetilde{M}}^{\mathbb Z}$ for each $x^{(k)}$.
We will show that $(y, s^{(1)}), \dots, (y, s^{(c_\pi)})$ are distinct $c_\pi$ points in $\pi_2^{-1}(y)$ and that there are no other points in $\pi_2^{-1}(y)$.

Let $J \subset \mathbb Z$ be the set of all $i$ for which $y_{[i-n, i+|w|-n-1]} = w$. $J$ is bi-infinite because $y$ is doubly transitive.
For each $i \in J$, $(s^{(k)}_i)_{1\le k \le c_\pi}$ are distinct $c_\pi$ symbols in $M$ because transition classes over $y$ are mutually separated. Therefore, $(y, s^{(k)})$ are distinct $c_\pi$ points in $\pi_2^{-1}(y)$.

It remains to show that there are no other points in $\pi_2^{-1}(y)$.
Suppose $(y,s^*)$ is in $\pi_2^{-1}(y)$.
Since $\pi_1$ is onto, there is some $x^* \in X$ such that $\pi_1(x^*) = (y, s^*)$.
The point $x^*$ must belong to one of the $c_\pi$ transition classes in $\pi^{-1}(y)$.
We may assume that $x^*$ is in the same transition class as $x^{(1)}$ (with respect to $\pi$).
Therefore, since $y$ is right transitive, for each $i \in J$, $s^*_i$ and $s^{(1)}_i$ must be the same symbol in $M$, by Lemma~\ref{lem:uniqueroutepoint}.
Therefore $s^* = s^{(1)}$ and we have $(y,s^*) = (y, s^{(1)})$.
Since $(y,s^*)$ was arbitrarily chosen, we have shown that there are no points in $\pi_2^{-1}(y)$ other than $(y, s^{(1)}), \dots, (y, s^{(c_\pi)})$.
\end{proof}
We have shown that the two factor codes have the desired properties and this completes the proof of the theorem.
\end{proof}

\begin{definition}
  Let $\pi: X \to Y$ be a factor code from an irreducible SFT onto a sofic shift. 
Any irreducible sofic shift $\widetilde{Y}$ and factor codes $\pi_1, \pi_2$ satisfying the conclusion of the theorem above are called a \emph{class degree decomposition} of $\pi$. In this case, the sofic shift space $\widetilde{Y}$ is called a \emph{class degree factor} of $X$ over $Y$ with respect to $\pi$.
\end{definition}

Class degree decompositions are not unique up to conjugacy in general. Depending on the choice of the minimal transition block $w$, we may get different decompositions.

Since the occurrences of the block $w$ along $y$ may have unbounded gaps, the class degree factor constructed from $w$ is usually strictly sofic.
There are factor codes where all class degree factors are strictly sofic. Any factor code $\pi: X \to Y$ which cannot be decomposed into two factor codes with an SFT in the middle is such an example.

\section{Uniqueness of relative equilibrium states over regular equilibrium states}
\label{sec:uniquenes}

Let $(X, T)$ be a TDS. Then $M(X, T)$ denotes the set of all invariant (probability) measures on $X$. This set is a compact metrizable space under the weak star topology (same as the vague topology in our case).
If $T$ is understood (usually when $X$ is a shift space so that $T$ is the shift map $\sigma_X$ on $X$), then we denote it by $M(X)$.

Given a measure $\mu \in M(X, T)$ and a function $\phi \in C(X)$ (where $C(X)$ is the set of all continuous (real-valued) functions on $X$), we denote by $h(\mu, T)$ or $h(\mu)$ the measure-theoretical entropy of $\mu$ with respect to $T$. The expression $\mu(\phi)$ denotes the integral $\int \phi \,d\mu$.

Given a continuous function $\phi$ on a shift space $X$, we say $\phi$ is H\"{o}lder-continuous if $\var_n \phi \le C \alpha^n$ for some constants $C>0$ and $0<\alpha<1$, where $$\var_n \phi := \max\{ |\phi(x) - \phi(x')| \  : \  x_{[-n,n]} = x'_{[-n,n]}, \  x, x' \in X \}$$

For each $m \in \mathbb N$, we denote by $ S_m \phi$ the cocycle sum $\phi + \phi\circ T + \dots + \phi\circ T^{m-1}$.

We denote by $P(X, T, \phi)$ (or $P(T, \phi)$ if $X$ is understood, or even $P(\phi)$) the topological pressure of $\phi$ with respect to $T$.

We denote by $P(\mu, T, \phi)$ or $P(\mu, \phi)$ the measure pressure $h(\mu, T)+ \mu(\phi)$, i.e., the free energy of $\mu$ with respect to $\phi$.

\begin{definition}
  Let $\phi \in C(X)$ where $(X, T)$ is some TDS.
  Within the measures $\mu$ in $M(X, T)$, those measures maximizing the measure pressure $P(\mu, \phi)$ are called \emph{equilibrium states} for the potential function $\phi$.
  Equilibrium states for the constant function $\phi = 0$ are called \emph{measures of maximal entropy} or MMEs for short.
\end{definition}

The variational principle for pressure states that the supremum of the measure pressure $P(\mu, \phi)$ over all $\mu \in M(X, T)$ is the same as the topological pressure $P(\phi)$. Therefore, equilibrium states for $\phi$ are precisely those $\mu$ satisfying the equality $P(\mu, \phi) = P(\phi)$.

It is known that if $(X, T)$ is expansive then the pressure map $\mu \mapsto P(\mu, \phi)$ is upper semi-continuous on the compact space $M(X, T)$ and therefore achieves maximum. Therefore, in this case, there is at least one equilibrium state for $\phi$.
For mixing SFTs, there is a unique equilibrium state for $\phi$ if $\phi$ is sufficiently regular. This uniqueness property seems to have no name. Here we will call such systems \emph{strongly} intrinsically ergodic systems, because systems with unique measure of maximal entropy are called intrinsically ergodic systems.
\begin{definition}
  Let $X$ be a shift space. We say $X$ is \emph{strongly intrinsically ergodic} for H\"{o}lder continuous functions if there is a unique equilibrium state for each H\"{o}lder-continuous function $\phi$ on $X$.
\end{definition}
\begin{remark}
  In a journal-published version of this paper and future papers, \(X\) will be just called \emph{intrinsically ergodic for} the class of H\"{o}lder continuous functions, by dropping ``strongly''.
\end{remark}

First we establish that irreducible sofic shifts are strongly intrinsically ergodic. This probably is a folklore result, but for completeness we will include a proof which closely parallels a standard proof of the similar fact for Axiom A diffeomorphisms \cite{Bowen-EqAnosov}. The proof works by transferring strongly intrinsic ergodicity of mixing SFTs to sofic shifts. For that, we need a quick lemma about pressure:
\begin{lemma}
  Let $\pi: X^* \to X$ be a factor code between shift spaces. Let $\phi: X \to \mathbb R$ be continuous and let $\phi^* = \phi\circ\pi$. Then $P(\phi^*) \ge P(\phi)$. Equality holds if $\pi$ is finite-to-one.
\end{lemma}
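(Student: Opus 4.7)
The plan is to apply the variational principle for topological pressure, $P(\phi) = \sup\{h(\mu) + \mu(\phi) : \mu \in M(X)\}$ and its analogue for $P(\phi^*)$, combined with the standard behavior of measure-theoretic entropy under factor maps.

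For the inequality $P(\phi^*) \ge P(\phi)$, I would fix an arbitrary $\mu \in M(X)$ and produce an invariant lift $\mu^* \in M(X^*)$ with $\pi\mu^* = \mu$. Such a lift exists because $\pi$ is surjective: the set of probability measures on $X^*$ projecting to $\mu$ is non-empty, weak-$*$ closed, convex, and $\sigma_{X^*}$-invariant inside the compact simplex of probability measures on $X^*$, so a standard Krylov--Bogolyubov argument shows it contains an invariant element. By change of variables, $\mu^*(\phi^*) = \mu^*(\phi\circ\pi) = \mu(\phi)$, and by the classical fact that entropy cannot decrease when passing to an extension, $h(\mu^*) \ge h(\mu)$. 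Adding these gives $P(\mu^*, \phi^*) \ge P(\mu, \phi)$, and taking the supremum over $\mu$ yields $P(\phi^*) \ge P(\phi)$.

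For equality when $\pi$ is finite-to-one, I would reverse the direction: start from an arbitrary $\mu^* \in M(X^*)$, set $\mu := \pi\mu^*$, and show $P(\mu^*, \phi^*) = P(\mu, \phi)$. The integral part is as before. For the entropy, I would invoke the classical fact that a finite-to-one factor code between shift spaces preserves measure-theoretic entropy: $h(\mu^*) = h(\mu)$. This is the only delicate point and is best cited rather than reproved; the standard justification uses that finite-to-one-ness between shift spaces gives uniformly bounded fibers (Theorem 8.1.19 in \cite{LM}), so a finite partition of $X^*$ separating the fibers has zero conditional entropy relative to the preimage $\sigma$-algebra, and then the Abramov--Rokhlin formula forces $h(\mu^*) = h(\mu)$. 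Combining, $P(\mu^*, \phi^*) = P(\mu, \phi) \le P(\phi)$, and taking sups gives $P(\phi^*) \le P(\phi)$, hence equality. No serious obstacle is anticipated; the entire lemma reduces to the variational principle plus two textbook facts (invariant lifts exist, and finite-to-one codes preserve entropy).
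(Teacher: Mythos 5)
Your proposal is correct and follows essentially the same route as the paper: the paper writes $P(\mu^*,\phi^*) = P(\pi\mu^*,\phi) + h(\mu^*|\pi\mu^*)$ with $h(\mu^*|\pi\mu^*)\ge 0$, uses surjectivity of the pushforward map $M(X^*)\to M(X)$ together with the variational principle for the inequality, and notes that the relative entropy vanishes in the finite-to-one case for equality. Your write-up merely makes explicit the two ingredients the paper treats as standard (existence of invariant lifts and entropy preservation under finite-to-one codes), so there is nothing substantive to add.
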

\begin{proof}
  For each $\mu^* \in M(X^*)$ and $\mu= \pi \mu^*$, we have \[P(\mu^*, \phi^*) = P(\mu, \phi) + h(\mu^*|\mu)\]
  where $h(\mu^*|\mu) = h(\mu^*) - h(\mu) \ge 0$ is the relative entropy of $\mu^*$ with respect to $\pi$.
  Since the pushforward map $M(X^*) \to M(X)$ is surjective, we obtain the desired inequality by applying the variational principle on $\phi^*$ and $\phi$ each.
  Equality in finite-to-one case follows because $h(\mu^*|\mu)=0$ in that case.
\end{proof}
\begin{lemma}
  Let $X$ be an irreducible sofic shift. Let $\phi:X\to \mathbb R$ be H\"{o}lder continuous. Then $\phi$ has a unique equilibrium state.
\end{lemma}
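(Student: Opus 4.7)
The plan is to reduce to the classical mixing-SFT case in two standard steps, using the preceding pressure lemma as the main bridge.

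First I would take the minimal right-resolving presentation $\pi\colon X^*\to X$, so that $X^*$ is an irreducible SFT and $\pi$ is a finite-to-one (in fact degree-one) $1$-block factor code. Set $\phi^*=\phi\circ\pi$; this is H\"older continuous on $X^*$ because $\pi$ is $1$-block. By the preceding lemma, $P(\phi^*)=P(\phi)$; moreover, because $\pi$ is finite-to-one, the relative entropy $h(\mu^*|\pi\mu^*)$ vanishes for every $\mu^*\in M(X^*)$, so $P(\mu^*,\phi^*)=P(\pi\mu^*,\phi)$. Combined with the surjectivity of the pushforward $M(X^*)\to M(X)$, this yields a correspondence between equilibrium states of $\phi^*$ on $X^*$ and equilibrium states of $\phi$ on $X$: the image of any $\phi^*$-equilibrium state is a $\phi$-equilibrium state, and every lift of a $\phi$-equilibrium state is a $\phi^*$-equilibrium state. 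Thus uniqueness for $(X^*,\phi^*)$ will imply the claim.

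Second I would reduce from irreducible SFT to mixing SFT. Let $p$ be the period of $X^*$ and decompose $X^*=X^*_0\sqcup\cdots\sqcup X^*_{p-1}$ into mixing components under $(\sigma^*)^p$, with $\sigma^* X^*_i=X^*_{(i+1)\bmod p}$. The standard bijection $\mu^*\leftrightarrow p\mu^*|_{X^*_0}$ between $\sigma^*$-invariant measures on $X^*$ and $(\sigma^*)^p$-invariant measures on $X^*_0$ transforms the free energy $h(\mu^*,\sigma^*)+\mu^*(\phi^*)$ into $\tfrac{1}{p}\bigl(h(\nu,(\sigma^*)^p)+\nu(S_p\phi^*)\bigr)$. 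Consequently, equilibrium states of $\phi^*$ on $(X^*,\sigma^*)$ correspond bijectively to equilibrium states of the H\"older potential $S_p\phi^*$ on the mixing SFT $(X^*_0,(\sigma^*)^p)$, and for the latter the classical Bowen--Ruelle theorem gives uniqueness. Tracing back through the two bijections produces the unique equilibrium state on $X^*$, and pushing it forward by $\pi$ yields the unique equilibrium state of $\phi$ on $X$.

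Both reductions are routine; each amounts to unwinding a pressure identity. The only mild pitfall is the ``lift'' direction in the first step, i.e., verifying that every $\phi$-equilibrium state arises as the image of some $\phi^*$-equilibrium state rather than just the reverse; but this follows immediately from $P(\mu^*,\phi^*)=P(\pi\mu^*,\phi)$, which holds for \emph{every} lift because $\pi$ is finite-to-one. Given that, the proof is essentially a transport of the classical mixing-SFT theorem along the minimal right-resolving presentation together with the period decomposition.
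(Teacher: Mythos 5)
Your proof is correct and follows essentially the same route as the paper: reduce to an irreducible SFT via the minimal right-resolving presentation and then to a mixing SFT via the spectral/period decomposition, transporting equilibrium states along pressure identities. The only cosmetic difference is that the paper justifies the pushforward direction using the degree-one a.e.-injectivity of $\pi$, whereas you invoke the vanishing of relative entropy $h(\mu^*|\pi\mu^*)=0$ for all lifts under a finite-to-one code; both are standard and the paper's preceding pressure lemma already records the fact you use.
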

\begin{proof}
  First we assume $X$ is an irreducible SFT. Then we have the spectral decomposition $X = \cup_{i=1}^m X_i$. where $X_i$ are disjoint from each other and $\sigma(X_i) = X_{i+1 \mod m}$ and $(X_1, \sigma^m)$ is conjugate to a mixing SFT.
  A measure $\mu \in M(X,\sigma)$ induces $\mu' \in M(X_1,\sigma^m)$ by restriction to $X_1$ and conversely, any $\mu' \in M(X_1,\sigma^m)$ induces a measure $\mu \in M(X,\sigma)$ as the convex combination of copies of $\mu'$ on $X_i$.
  Therefore $\mu \leftrightarrow \mu' $ is a bijection between $M(X,\sigma)$ and $M(X_1,\sigma^m)$. We have $h(\mu', \sigma^m) = m h(\mu, \sigma)$ and $\mu'(S_m\phi) = m \mu(\phi)$. Therefore, finding $\mu$ maximizing $P(\mu, \phi)$ is equivalent to finding $\mu'$ maximizing $P(\mu', S_m \phi)$. Since $S_m \phi$ restricted to $X_1$ is H\"{o}lder continuous, we are done.
  We have shown that any irreducible SFT is strongly intrinsically ergodic for H\"{o}lder continuous functions.

  The strictly sofic case remains.
  Let $\pi:X^* \to X$ be the minimal right resolving presentation of $X$. In particular, $X^*$ is an irreducible SFT and $\pi$ has degree one.
  Let $\phi^* = \phi\circ \pi$. The function $\phi^*: X^*\to \mathbb R$ is H\"{o}lder continuous.

  As the first part of this proof showed, we have a unique equilibrium state $\mu_{\phi^*}$ for $\phi^*$.
  Since $\mu_{\phi^*}$ is a fully supported ergodic measure on $X^*$, the set of doubly transitive points is a full measure set with respect to $\mu_{\phi^*}$.
  Let $\mu_\phi = \pi \mu_{\phi^*}$.
  Then $\mu_\phi$ is an invariant measure on $X$. The measure preserving systems arising from $\mu_\phi$ and $\mu_{\phi^*}$ are conjugate because $\pi$ is one-to-one except on a $\mu_{\phi^*}$-null set, namely, the complement of the set of the doubly transitive points in $X^*$. ($\pi$ is one-to-one on doubly transitive points because $\pi$ has degree one.)
  In particular, $h(\mu_\phi) = h(\mu_{\phi^*})$ and $\mu_\phi(\phi) = \mu_{\phi^*}(\phi^*)$. Therefore, we have
  \[ P(\mu_\phi, \phi) = P(\mu_{\phi^*}, \phi^*) = P(\phi^*) \ge P(\phi) \]
  Hence $\mu_\phi$ is an equilibrium state for $\phi$.

  Suppose $\mu$ is any other equilibrium state of $\phi$. Pick an invariant measure $\mu^*$ on $X^*$ with $\pi \mu^* = \mu$. Then $h(\mu^*) \ge h(\mu)$ and we have
  \[ P(\mu^*, \phi^*) \ge P(\mu, \phi) = P(\phi) = P(\phi^*) \]
  Therefore $\mu^*$ is an equilibrium state for $\phi^*$ and by uniqueness we have $\mu^* = \mu_{\phi^*}$. Then $\mu = \pi \mu_{\phi^*} = \mu_\phi$.
\end{proof}

Next, we establish the unique lift property of regular equilibriums via finite-to-one factor codes. Tuncel \cite{Tuncel1981conditional} prove this for Markov measures on SFTs, but the same proof works for H\"{o}lder continuous functions and irreducible sofic shifts. We reproduce the proof shortly in our notation in the following lemma.
\begin{lemma}\label{lem:uniquelift}
  Let $\pi: X\to Y$ be finite-to-one factor codes between two shift spaces that are strongly intrinsically ergodic (for H\"{o}lder continuous functions). Let $\psi$ be a H\"{o}lder continuous function on $Y$ and let $\mu_\psi$ be its unique equilibrium state. Then there is a unique invariant measure in $M(X)$ that projects to $\mu_\psi$. The unique measure is the unique equilibrium state for $\psi\circ \pi$.
\end{lemma}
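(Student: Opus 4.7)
The plan is to reduce the claim to the strongly intrinsically ergodic property of $X$ applied to the potential $\phi := \psi \circ \pi$. First I would verify that $\phi$ is H\"older continuous on $X$: since $\pi$ is a sliding block code with some memory/anticipation $r$, any two points of $X$ agreeing on $[-n,n]$ have images in $Y$ agreeing on $[-(n-r),n-r]$, so $\var_n \phi \le \var_{n-r} \psi$ and H\"older continuity of $\psi$ transfers to $\phi$. Since $X$ is strongly intrinsically ergodic for H\"older continuous functions by hypothesis, $\phi$ has a unique equilibrium state $\mu_\phi$ on $X$.

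Next I would show that every $\mu^* \in M(X)$ with $\pi \mu^* = \mu_\psi$ must coincide with $\mu_\phi$. From the previous lemma we have the identity
\[ P(\mu^*, \phi) \;=\; P(\mu_\psi, \psi) + h(\mu^*|\mu_\psi), \]
and the finite-to-one hypothesis on $\pi$ yields $P(\phi) = P(\psi)$. Since $\mu_\psi$ is the equilibrium state for $\psi$, we have $P(\mu_\psi,\psi)=P(\psi)=P(\phi)$, so $P(\mu^*,\phi) = P(\phi) + h(\mu^*|\mu_\psi)$. The variational principle forces $P(\mu^*,\phi) \le P(\phi)$, and conditional entropy is non-negative, so $h(\mu^*|\mu_\psi) = 0$ and $\mu^*$ is an equilibrium state for $\phi$. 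By strongly intrinsic ergodicity of $X$, then, $\mu^* = \mu_\phi$.

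Existence of a lift is automatic: the pushforward $M(X) \to M(Y)$ induced by the factor code $\pi$ is surjective (take any Borel probability measure projecting to $\mu_\psi$ and pass to an invariant Ces\`aro limit), so $\mu_\psi$ admits at least one invariant lift, which by the preceding argument must be $\mu_\phi$. I do not anticipate any genuine obstacle here: the only routine check is that H\"older continuity is preserved under composition with $\pi$, and the rest is a direct consequence of the variational principle, the identity from the previous lemma, and strongly intrinsic ergodicity of $X$.
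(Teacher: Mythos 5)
Your proposal is correct and follows essentially the same route as the paper: both reduce the statement to the uniqueness of the equilibrium state for $\psi\circ\pi$ on $X$ via the pressure identity $P(\mu^*,\psi\circ\pi)=P(\pi\mu^*,\psi)+h(\mu^*|\pi\mu^*)$ and the equality $P(\psi\circ\pi)=P(\psi)$ for finite-to-one codes. The only cosmetic difference is that you derive $h(\mu^*|\mu_\psi)=0$ from the variational principle, whereas the paper reads it off directly from the finite-to-one hypothesis; both are valid.
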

\begin{proof}
  Since $\pi$ is finite-to-one, we have $P(\psi) = P(\psi\circ\pi)$.
  Let $\mu$ be the unique equilibrium state for $\psi\circ\pi$. This is unique because $\psi\circ\pi$ is H\"{o}lder continuous. Its image $\pi\mu$ is an equilibrium state for $\psi$ and so the image must be $\mu_\psi$.

  Let $\mu'$ be another measure in $M(X)$ whose image is $\mu_\psi$. Then \[P(\mu', \psi\circ\pi) = P(\mu_\psi, \psi) = P(\psi) = P(\psi\circ\pi).\]
  Therefore $\mu'$ is an equilibrium state for $\psi\circ \pi$ but the equilibrium state is unique.
\end{proof}

\begin{definition}
  Let $\pi: X \to Y$ be a factor map between two TDSs $(X, T)$ and $(Y,S)$. Let $\nu \in M(Y,S)$.
  A measure $\mu \in M(X,T)$ is called a \emph{measure of maximal relative entropy (MMRE)} over $\nu$ if it maximizes the entropy $h(\mu)$ subject to the constraint $\pi \mu = \nu$.
\end{definition}
\begin{definition}
  Let $\pi, \nu$ be as in the previous definition.
Let $\phi \in C(X)$.
A measure $\mu \in M(X,T)$ is called a \emph{relative equilibrium state} of $\phi$ over $\nu$ if it maximizes the measure pressure $P(\mu, \phi)$ subject to the same constraint $\pi\mu = \nu$.
\end{definition}

\begin{lemma}
  \label{lem:uniquemmre1}
  [special case of the main theorem in \cite{yoo2014releqclass}]
  Let $\pi_1: X \to \widetilde{Y}$ be a class degree one factor code from an irreducible SFT onto a sofic shift.
  Let $\widetilde{\nu}$ be a fully supported ergodic measure on $\widetilde{Y}$.
  Let $\phi$ be H\"{o}lder continuous on $X$.
  Then there is a unique relative equilibrium state of $\phi$ over $\widetilde{\nu}$.
\end{lemma}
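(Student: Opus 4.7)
The statement is explicitly flagged as a special case of the main theorem of \cite{yoo2014releqclass}, so the intended proof is to invoke that theorem after verifying that our hypotheses (irreducible SFT source, sofic target, class degree one $\pi_1$, fully supported ergodic $\widetilde{\nu}$, H\"older $\phi$) match the ones there. Below I sketch the approach I would take if proving the lemma from scratch.

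Existence is routine. The set $\{\mu \in M(X) : \pi_1 \mu = \widetilde{\nu}\}$ is nonempty by surjectivity of $\pi_1$ and weak-star closed by continuity of pushforward, hence compact. Expansiveness of $X$ makes the free energy $\mu \mapsto h(\mu) + \mu(\phi)$ upper semi-continuous on $M(X)$, so a maximum is attained and relative equilibrium states exist.

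For uniqueness, suppose $\mu_1 \neq \mu_2$ are two relative equilibrium states of $\phi$ over $\widetilde{\nu}$. Since entropy is affine on $M(X, T)$, every convex combination is again a relative equilibrium state; using ergodic decomposition and ergodicity of $\widetilde{\nu}$ (which forces each ergodic component of any lift of $\widetilde{\nu}$ to itself lift $\widetilde{\nu}$), one may assume $\mu_1, \mu_2$ are ergodic, hence mutually singular. Disintegrate each $\mu_i$ along $\pi_1$ to obtain conditional measures $\mu_{i, \widetilde{y}}$ on $\pi_1^{-1}(\widetilde{y})$. The plan is then a two-step argument: first, establish a \emph{relative Gibbs property} for any relative equilibrium state $\mu$, namely that $\widetilde{\nu}$-a.e.\ the conditional measure assigns fiber-cylinder mass comparable to $\exp(S_n \phi)$ up to a normalization depending only on $\widetilde{y}$; second, use the class degree one hypothesis to force this conditional structure to be uniquely determined $\widetilde{\nu}$-a.e., contradicting mutual singularity of $\mu_1, \mu_2$.

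The main obstacle is the second step. Since $\widetilde{\nu}$ is fully supported ergodic, $\widetilde{\nu}$-a.e.\ $\widetilde{y}$ is right transitive, so Lemma~\ref{lem:classdeg1} supplies, for any ordered pair $x, x' \in \pi_1^{-1}(\widetilde{y})$, a bridging preimage $x''$ that is left-asymptotic to $x$ and right-asymptotic to $x'$. H\"older regularity of $\phi$ then implies that $S_n \phi$ differences between a point and its bridging partner are controlled by summable quantities depending only on the local behavior at $\widetilde{y}$, so the relative Gibbs mass ratios along the fiber are pinned down up to a global normalizer, forcing $\mu_{1, \widetilde{y}} = \mu_{2, \widetilde{y}}$ for $\widetilde{\nu}$-a.e.\ $\widetilde{y}$. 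The unique-routing property (Lemma~\ref{lem:uniqueroutepoint}) for minimal transition blocks is the technical lever I would expect to use to organize these bridging estimates in a way that is uniform along the orbit of $\widetilde{y}$, converting the pointwise characterization from Lemma~\ref{lem:classdeg1} into a statement about conditional measures.
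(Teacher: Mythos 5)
The paper offers no proof of this lemma: it is imported wholesale as a special case of the main theorem of the cited reference, and your opening paragraph correctly identifies that invoking that theorem (after matching hypotheses) is the entire intended argument. Your additional from-scratch sketch is a reasonable outline of how such a proof goes, but it is not needed for this paper and is not what the paper does, so no further comparison is required.
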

We remark that the above result for the special case $\phi=0$ is an older result. This old special case alone already generates a new result if combined with our first main theorem.

We are ready to apply the lemmas so far to prove the second main theorem, which is a consequence of the first main theorem.
\begin{theorem}
  Let $\pi: X \to Y$ be a factor code from an irreducible SFT onto a sofic shift.
  Let $\phi, \psi$ be H\"{o}lder continuous functions on $X, Y$ respectively.
  Let $\nu \in M(Y)$ be the unique equilibrium state for $\psi$.
  Then there is a unique relative equilibrium state $\mu$ of $\phi$ over $\nu$.
\end{theorem}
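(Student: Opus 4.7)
The plan is to apply the class degree decomposition from the first main theorem and then combine Lemma~\ref{lem:uniquelift} (on the finite-to-one factor) with Lemma~\ref{lem:uniquemmre1} (on the class degree one factor). First, I write $\pi = \pi_2\circ\pi_1$ where $\pi_1:X\to\widetilde{Y}$ has class degree one and $\pi_2:\widetilde{Y}\to Y$ is finite-to-one, with $\widetilde{Y}$ an irreducible sofic shift. Both $\widetilde{Y}$ and $Y$ are strongly intrinsically ergodic for H\"{o}lder continuous functions by the earlier lemma on irreducible sofic shifts. Applying Lemma~\ref{lem:uniquelift} to $\pi_2$ with the H\"{o}lder function $\psi$, I obtain that $\nu$ has a unique invariant lift $\widetilde{\nu}\in M(\widetilde{Y})$, and that $\widetilde{\nu}$ is the unique equilibrium state for the H\"{o}lder function $\psi\circ\pi_2$ on $\widetilde{Y}$.

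Second, I verify that $\widetilde{\nu}$ is fully supported and ergodic, which is required in order to apply Lemma~\ref{lem:uniquemmre1}. Ergodicity is automatic from uniqueness: if $\widetilde{\nu}$ were not ergodic, its ergodic components would also be equilibrium states of $\psi\circ\pi_2$ by linearity of the measure pressure, contradicting uniqueness. Full support follows by tracing the construction in the proof of the earlier strong intrinsic ergodicity lemma: the equilibrium state on an irreducible sofic shift is the pushforward of the equilibrium state on its minimal right resolving SFT cover, which has full support, and full support passes through a surjective factor map.

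Third, with $\widetilde{\nu}$ a fully supported ergodic measure on $\widetilde{Y}$, Lemma~\ref{lem:uniquemmre1} applied to $\pi_1$ and $\phi$ yields a unique relative equilibrium state $\mu\in M(X)$ of $\phi$ over $\widetilde{\nu}$. I claim this same $\mu$ is the unique relative equilibrium state of $\phi$ over $\nu$. Any $\mu'\in M(X)$ with $\pi\mu' = \nu$ satisfies $\pi_2(\pi_1\mu') = \pi\mu' = \nu$, so $\pi_1\mu'$ is an invariant lift of $\nu$ under $\pi_2$ and hence equals $\widetilde{\nu}$ by the uniqueness established in the first step. Conversely, any $\mu'$ with $\pi_1\mu' = \widetilde{\nu}$ automatically satisfies $\pi\mu' = \pi_2\widetilde{\nu} = \nu$. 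Therefore the constraint sets $\{\mu' : \pi\mu' = \nu\}$ and $\{\mu' : \pi_1\mu' = \widetilde{\nu}\}$ coincide, and maximizing $P(\mu',\phi)$ over either set produces the same unique maximizer $\mu$.

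The step I expect to require the most care is the verification that $\widetilde{\nu}$ is fully supported and ergodic, since Lemma~\ref{lem:uniquemmre1} needs both of these properties of the base measure while Lemma~\ref{lem:uniquelift} only guarantees invariance of the lift. Once this is secured, the remainder of the argument is a clean two-stage reduction made possible entirely by the decomposition theorem.
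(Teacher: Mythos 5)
Your proposal is correct and follows essentially the same route as the paper: decompose $\pi=\pi_2\circ\pi_1$, use Lemma~\ref{lem:uniquelift} on the finite-to-one leg to pin down the unique lift $\widetilde{\nu}$ (which the paper, like you, shows is fully supported and ergodic by passing through the minimal right resolving SFT cover of $\widetilde{Y}$), observe that the measure fibers over $\nu$ and over $\widetilde{\nu}$ coincide, and conclude with Lemma~\ref{lem:uniquemmre1}. The only cosmetic difference is that the paper derives ergodicity and full support of $\widetilde{\nu}$ in one stroke as the image of the equilibrium state $\rho$ on the SFT cover, whereas you split ergodicity off as a consequence of uniqueness; both are fine.
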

\begin{proof}
  By expansivity of the shift map for $X$, the pressure map $\mu \mapsto P(\mu, \phi)$ is upper semi-continuous. Since the measure fiber $\pi^{-1}(\nu)$ is a compact subset of $M(X)$, there is at least one $\mu$ in it maximizing the measure pressure. In other words, there is at least one relative equilibrium state of $\phi$ over $\nu$.

  Let $\mu$ be any measure in $M(X)$ that projects to $\nu$.
    Let $\widetilde{Y}, \pi_1, \pi_2$ be a fixed class degree decomposition for $\pi$.
  Let $\widetilde{\nu}$ be the image of $\mu$ on $\widetilde{Y}$.
  Since $\mu$ projects to $\nu$ on $Y$, the measure $\widetilde{\nu}$ must project to the same measure $\nu$. By uniqueness in Lemma~\ref{lem:uniquelift}, $\widetilde{\nu}$ is the unique lift of $\nu$ to $\widetilde{Y}$. In particular, $\widetilde{\nu}$ does not depend on $\mu$.

\[\begin{tikzcd}
  X, \mu \ar[dd, "\pi"] \ar[rd, "\pi_1"] &  & R, \rho \ar[dl, "\pi_R"] \\
     & \widetilde{Y}, \widetilde{\nu} \ar[dl, "\pi_2"] \\
  Y, \nu
\end{tikzcd}\]

  We claim that $\widetilde{\nu}$ is fully supported and ergodic. One way of showing this is to lift $\widetilde{\nu}$ to an equilibrium state $\rho$ on $R$ where $\pi_R: R\to \widetilde{Y}$ is the minimal right resolving presentation of $\widetilde{Y}$. This is done by applying Lemma~\ref{lem:uniquelift} to $\pi_2\circ\pi_R: R \to Y$ and $\nu$ so that $\rho$ is the equilibrium state for $\psi\circ\pi_2\circ\pi_R$.
  Being the unique equilibrium state for a H\"{o}lder continuous function on an irreducible SFT, the measure $\rho$ must be fully supported and ergodic. Therefore its image $\widetilde{\nu}$ is also fully supported and ergodic.

  The measure fiber on $X$ over $\nu$ and the measure fiber over $\widetilde{\nu}$ are the same subsets of $M(X)$. Therefore, $\mu$ is a relative equilibrium state of $\phi$ over $\nu$ if and only if it is a relative equilibrium state of $\phi$ over $\widetilde{\nu}$.
  But the latter relative equilibrium state is unique by Lemma~\ref{lem:uniquemmre1}, since $\pi_1$ has class degree one and $\widetilde{\nu}$ is fully supported.
\end{proof}

We state the special case $\phi=0$ of the above main theorem as follows.
\begin{corollary}\label{cor:answer}
  Let $\pi: X \to Y$ be a factor code from an irreducible SFT onto a sofic shift.
  Let $\nu$ be the unique equilibrium state of some H\"{o}lder-continuous function on $Y$.
  Then there is a unique measure of maximal relative entropy on $X$ over $\nu$.
\end{corollary}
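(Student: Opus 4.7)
The plan is to derive this corollary as the special case $\phi \equiv 0$ of the preceding theorem. First I would observe that the zero function on $X$ is trivially H\"{o}lder continuous, and by hypothesis $\nu$ is the unique equilibrium state for some H\"{o}lder continuous function $\psi$ on $Y$, so both hypotheses of the theorem are satisfied. Invoking the theorem then produces a unique relative equilibrium state $\mu$ of $\phi \equiv 0$ over $\nu$.

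The remaining step is to unwind the definitions and verify that ``relative equilibrium state of $0$ over $\nu$'' is the same notion as ``measure of maximal relative entropy over $\nu$.'' This is immediate: for $\phi \equiv 0$ we have $P(\mu,\phi) = h(\mu) + \mu(\phi) = h(\mu)$ for every $\mu \in M(X)$, so the constrained optimization that defines a relative equilibrium state, namely maximizing $P(\mu,\phi)$ subject to $\pi\mu = \nu$, reduces to maximizing $h(\mu)$ subject to $\pi\mu = \nu$, which is exactly the definition of an MMRE over $\nu$. The sets of maximizers therefore coincide, and the uniqueness statement of the theorem transfers verbatim to the corollary. I do not anticipate any obstacle, since all the substantive work has already been carried out at the level of the preceding theorem, and this corollary is a direct specialization.
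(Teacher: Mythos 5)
Your proposal is correct and matches the paper exactly: the corollary is stated there precisely as the special case $\phi=0$ of the main theorem, with the identification of relative equilibrium states of the zero function with MMREs being immediate from the definitions, just as you argue.
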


We remark that the problem of obtaining a concrete description of the unique MMRE is still open.
It would be nice to have a description concrete enough to prove that $\mu$ is Bernoulli for instance.
But the question of whether $\mu$ is always Bernoulli is also open.

If $\nu$ is Markov, then $\rho$ (in the proof) is also Markov. So $\widetilde{\nu}$ is a hidden Markov measure. Since $\widetilde{Y}$ is usually strictly sofic, we cannot say that $\widetilde{\nu}$ is Markov. Nonetheless, the MMRE $\mu$ over the Markov $\nu$ is the MMRE over the hidden Markov $\widetilde{\nu}$ with respect to a class degree one map.
In order to describe $\mu$, we only need to describe MMREs over hidden Markov measures under class degree one maps.
For certain class degree one maps (maps with singleton clumps \cite{PQS-MaxRelEnt} for instance), this is doable.
For arbitrary class degree one factor codes, this seems to require further investigation, but it is hoped that this will turn out to be easier than using the original arbitrary infinite-to-one factor code directly.

\bibliographystyle{amsplain}

\providecommand{\bysame}{\leavevmode\hbox to3em{\hrulefill}\thinspace}
\providecommand{\MR}{\relax\ifhmode\unskip\space\fi MR }
\providecommand{\MRhref}[2]{%
  \href{http://www.ams.org/mathscinet-getitem?mr=#1}{#2}
}
\providecommand{\href}[2]{#2}

\end{document}